\shorttitle{Comparison of Markov chains} 
\newtheorem{alg}[theorem]{Algorithm}
\newenvironment{algorithm}{\begin{alg}\rm}{\end{alg}}
\newcommand{\ind}{\mathbf{1}}
\newcommand{\vol}{{\rm vol}}
\newcommand{\R}{\mathbb{R}}
\newcommand{\abs}[1]{\left\vert #1 \right\vert}	
\newcommand{\norm}[1]{\left\Vert #1 \right\Vert}	
\newcommand{\scalar}[2]{\langle #1 , #2 \rangle}
\newcommand{\set}[1]{\lbrace #1 \rbrace}
\renewcommand{\P}{\mathbb{P}}
\renewcommand{\l}{\langle}
\renewcommand{\r}{\rangle}
\renewcommand{\a}{\alpha}
\renewcommand{\t}{\theta}
\newcommand{\dint}{\,\text{\rm d}}
\newcommand{\ball}{\mathbb{B}_d} 
\newcommand{\sphere}{\mathbb{S}_{d-1}}
\newcommand{\A}{\mathcal{A}}
\newcommand{\wt}{\widetilde}
\begin{document}

\title{Comparison of hit-and-run, slice sampler\\ and random walk Metropolis} 

\authorone[
Universit\"at G\"ottingen 
]
{Daniel Rudolf}
\addressone{Goldschmidtstra\ss e 7, 37077 G\"ottingen, Germany, Email: daniel.rudolf@uni-goettingen.de}

\authortwo[Universit\"at Linz]
{Mario Ullrich}

\addresstwo{Altenberger Stra\ss e 69, 4040 Linz, Austria, Email: mario.ullrich@jku.at}

\begin{abstract}
Different Markov chains can be used for approximate sampling 
of a distribution given by an unnormalized density function with respect to the Lebesgue measure.
The hit-and-run, (hybrid) slice sampler and random walk Metropolis algorithm are popular tools to simulate such Markov chains.
We develop a general approach to compare the efficiency of these sampling procedures by the 
use of a partial ordering of their Markov operators, the
covariance ordering. In particular, we show that the hit-and-run and the simple slice sampler are more efficient than a hybrid slice sampler 
based on hit-and-run which, itself, 
is more efficient than a (lazy) random walk Metropolis algorithm.
\end{abstract}

\keywords{hit-and-run; slice sampler; random walk Metropolis; covariance ordering} 

\ams{60J22}{65C05; 60J05} 

\section{Introduction}
In many scenarios of Bayesian statistics, statistical physics 
and other branches of applied
sciences, see \cite{RoRo02,LoVe06-1,Handbook}, it is of interest to sample 
on $\R^d$ with respect to a distribution $\pi$.
In particular, we assume that $\pi$ is given by an unnormalized density.
More precisely, let
$K\subseteq \R^d$ be an open, measurable set
and $\rho \colon K \to (0,\infty)$ be 
a positive, bounded and with respect to the Lebesgue measure an almost 
everywhere continuous 
function with $\int_K \rho(x) \dint x \in (0,\infty)$.
Then,
define the probability measure $\pi$ through $\rho$ by
\[
\pi(A)=\frac{\int_A \rho(x)\dint x}{ \int_K \rho(x)\dint x}
\]
for all
measurable $A\subseteq K$. 

Maybe the most successful approach to approximate $\pi$ is the construction of a suitable Markov chain.
The hit-and-run algorithm, the random walk Metropolis, 
the simple slice sampler 
and hybrid slice sampler provide such construction methods. 
A crucial question is: Which one of these algorithm should be used?

This question is of course related to the speed of convergence of 
the Markov chain sampling and any answer depends very much on the imposed assumptions. 
In general  
it is difficult to derive explicit estimates of this speed
of convergence. But, it might be possible to prove that
one algorithm is better than another. 
This motivates the idea of the comparison of Markov chains.

The first comparison result of Markov chains is due to Peskun \cite{Pe73}. 
There, a partial ordering
on finite state spaces is invented, where one transition kernel has higher order than another one
if the former dominates the latter off the diagonal. This order was later extended by Tierney \cite{Ti98}
to general state spaces. However, for the Markov chains we have in mind it seems not 
possible to use this off-diagonal ordering. 
We consider a partial ordering on the set of linear operators, see \cite[p.~470]{Kr89}. 
In the context of Markov chains this ordering is called covariance ordering, see \cite{Mi01,MiLe09}.
Let $L_2(\pi)$ be the Hilbert space of functions with finite stationary variance and assume that
$P_1,P_2 \colon L_2(\pi) \to L_2(\pi)$ are two self-adjoint linear operators. 
Then we say, $P_1 \leq P_2$ if and only if 
\[
 \scalar{P_1 f}{f}_\pi \geq \scalar{P_2 f }{f}_\pi, \quad f\in L_2(\pi).
\]
Here the inner-product of $L_2(\pi)$ is given by 
\[
 \scalar{f_1}{f_2}_\pi = \int_K f_1(x) f_2(x) \dint \pi(x), \quad f_1,f_2\in L_2(\pi).
\]
The operator $P_1$ is called positive if $P_1 \geq 0$, that is, $\scalar{P_1 f}{f}_\pi \geq 0$ for any $f\in L_2(\pi)$.
Reversible transition kernels of given 
Markov chains induce self-adjoint Markov operators and
we can compare these operators.

Let $P_1,P_2$ be two of such Markov operators and assume that $0\leq P_1 \leq P_2$.
There are a number of consequences for the corresponding Markov chains: 
For example, the 
spectral gap of $P_2$ is smaller than that of $P_1$. 
The spectral gap of a Markov chain is a quantity which is closely related to 
the speed of convergence to $\pi$, see \cite{Ba05,Ru12}.
Another example is concerning  
the stationary asymptotic variance of
sample averages. For $i=1,2$ let
$S_n^{(i)}(f)=\frac{1}{n} \sum_{k=1}^n f(X^{(i)}_k)$ 
with Markov chain $(X_k^{(i)})_{k\in \N}$ 
starting in stationarity corresponding 
to $P_i$ and an arbitrary function $f\in L_2(\pi)$. 
These sample averages give approximations of the mean $Ef=\int_K f \dint \pi$
and, in \cite{Ti98,Mi01} it is observed that, if $P_1 \leq P_2$, then
\[
 \mathbb{V}(f,P_2) \leq \mathbb{V}(f,P_1).
\]
Here $\mathbb{V}(f,P_i):= \lim_{n\to \infty} n \cdot \mathbb{E}\vert S_n^{(i)}(f)-E f \vert^2 $
is the stationary asymptotic 
variance of $S_n^{(i)}(f)$.
In other words, the Markov chain 
of $P_2$ is (asymptotically) more efficient than that of $P_1$. 
For more details to implications of $P_1\leq P_2$ 
we refer to Section~\ref{sec:impl}.

The goal of this article is to compare the hit-and-run algorithm, 
the (lazy) random walk Metropolis, the simple slice sampler
and a hybrid slice sampler based on hit-and-run according to this partial ordering.
To do so we develop a systematic approach for the comparison of
Markov chains which can be written by a suitable two step procedure.

The intuition behind our approach is the following. 
It is a simple and well-known observation that, if the self-adjoint and 
positive Markov operators $P_1$ and $P_2$ satisfy $P_1 P_2 = P_2$, then 
$P_1\le P_2$. 
However, due to the quite restrictive assumptions, this technique can only 
be used in very few cases. 
A useful generalization of this technique, which was invented in a special case 
in~\cite{Ul14}, can be used whenever we have the representation 
$P_i=R \wt P_i R^*$ for certain operators $R$ and $\wt P_i$, $i=1,2$. 
In this case, and if $\wt P_1 \wt P_2 = \wt P_2$, one can also conclude that 
$P_1\le P_2$, see Lemma~\ref{lemma:main}. 
Our comparison inequalities therefore follow once we have established such 
representations for the Markov chains under consideration. 

\subsection*{The algorithms}
Let us briefly explain the algorithms.
Roughly, a transition of the hit-and-run algorithm works as follows.
Choose randomly a line through the current state and sample
according to $\pi$ restricted to this line.
Thus, instead of sampling with respect to $\pi$ on $K\subseteq\R^d$ 
hit-and-run only uses sampling with respect to $\pi$ on
$1$-dimensional lines through the state space, which is feasible in many cases
\footnote{For example, if $\rho$ is a log-concave density, then the restriction 
of $\pi$ to a line has also a log-concave density and one can use different
acceptance/rejection methods to sample such a distribution on the line efficiently. 
For details we refer to \cite[Sect.~2.4.2]{CaRo04} 
see also \cite{LoVe07,Ru13}.}.

In contrast, the simple slice sampler chooses a suitable $d$-dimensional set, a
super-level set of $\rho$, depending on the current state and then, samples 
the next state of the Markov chain uniformly distributed on this super-level set. 
Sampling of the uniform distribution on a $d$-dimensional set is often not efficiently
implementable. This is why this Markov chain is mostly of theoretical interest. 

The hybrid slice sampler we are interested in overcomes this problem by replacing the 
uniform sampling by one step of a hit-and-run algorithm on the super-level set: 
First, choose a line through the current state uniformly at random and then generate the 
next state uniformly distributed on the intersection of that line with the super-level set.
We call this procedure hybrid slice sampler based on hit-and-run.
Intuitively, it is clear 
that the simple slice sampler is better than that hybrid one.
The intuition for the comparison of the hit-and-run algorithm and 
the hybrid slice sampler based on hit-and-run is not so obvious. 
Observe that this particular hybrid sampler can also be interpreted as
choosing first a line and, then, performing a simple slice sampling step according the
distribution of $\pi$ restricted to that line.
This observation leads us to the fact that the hit-and-run algorithm
is better.

Finally, let us consider the random walk Metropolis.
Assume that we have a proposal density $q$ on $\R^d$ 
and let $x\in K$ be the current state. A transition works as follows:
Generate $z\in \R^d$ according the distribution determined by $q$ and
accept $x+z$ as the next state with probability $\min\{1,\rho(x+z)/\rho(x)\}$
if $x+z\in K$. Otherwise stay at $x$.
It is well known, see \cite{Hi98,RuUl13}, that
the random walk Metropolis can be interpreted as a certain (hybrid) slice sampling procedure, which
runs a random walk according to $q$ on the super-level set with uniform limit distribution.
We want to compare the random walk Metropolis and the hybrid slice sampler based on
a hit-and-run.
Thus, the question is whether the uniform hit-and-run step is better than the 
random walk step on the super-level set. It turns out that this is indeed the case.

\subsection*{Main results}
Now let us formulate the main results.
To compare the above Markov chains we develop 
in Section~\ref{sec:aux} a general approach which might be of interest on its own.
There, in Lemma~\ref{lemma:main} conditions for two suitably defined Markov operators $P_1,P_2$
are stated which imply that $P_1 \leq P_2$. This lemma
is the main ingredient for the comparison argument. Its application 
leads us to Theorem~\ref{thm:main}. 
For the Markov operators $M, U, H, S$ of the (lazy) random walk Metropolis with rotational invariant
proposal $q$, the hybrid slice sampler based on hit-and-run, 
the hit-and-run algorithm and the simple slice sampler, respectively, 
we have
\begin{align*}
  M & \leq U \leq H,\\
  M & \leq U \leq S.
 \end{align*}
Thus, the random walk Metropolis is less efficient than the hit-and-run algorithm and simple slice sampler.
The hybrid slice sampler based on hit-and-run we propose lies concerning efficiency in between.

\subsection*{Outline}
The paper is organized as follows. In the next section
we introduce the notation we use, comment on the partial ordering, present consequences
for the Markov chains and state the algorithms we study in detail. 
In Section~\ref{sec:aux} we invent a new approach how to compare Markov chains
with a specific structure. Section~\ref{sec:main} contains the application
of the former developed comparison arguments. Finally, in Section~\ref{sec:con_rem},
we give some concluding remarks and discuss open problems.

 \section{Preliminaries}	
Let $L_2(\pi)$ be the Hilbert space of functions 
$f\colon K \to \R$ with finite norm $\|f\|_\pi=\l f,f\r_\pi^{1/2}$, 
where the inner-product of $f_1, f_2 \in L_2(\pi)$ is denoted by
\[
 \scalar{f_1}{f_2}_\pi = \int_K f_1(x) f_2(x) \, \dint \pi(x).
\]
Let $P$ be a transition kernel on $K$ which is reversible with respect to $\pi$ and
let $(X_n)_{n\in\N}$ be a Markov chain with transition kernel $P$, i.e.
\[
P(x,A) = \P(X_{k+1}\in A \mid X_{k}=x)
\]
almost surely for all $k\in \N$ and $A\subseteq K$.
The corresponding Markov operator, also denoted by $P$, is given by
\begin{equation} \label{eq:MO}
Pf(x) 
= \int_K f(y) P(x,\!\dint y)
\end{equation}
for $f\in L_2(\pi)$. 
Obviously, $P(x,A)=P\ind_A(x)$ for all 
$A\subseteq K$,
where 
$\ind_A$ denotes the indicator function of $A$. 
Note that, by the reversibility, 
$P: L_2(\pi)\to L_2(\pi)$ is self-adjoint.
We say that a (Markov) operator $P$ on $L_2(\pi)$ is \emph{positive} 
if $\scalar{Pf}{f}_\pi\ge0$ for all $f\in L_2(\pi)$.

\subsection{On the ordering and consequences}
With this notation we define on the set of Markov operators the
following partial ordering.
For Markov operators $P_1$ and $P_2$ we write 
\[
P_1 \le P_2
\]
if and only if $\scalar{P_1f}{f}_\pi\ge\scalar{P_2f}{f}_\pi$ 
for all $f\in L_2(\pi)$.
In the following let us motivate why the consideration of this ordering is 
particularly meaningful
for Markov chains.

\subsubsection{Consequences for the speed of convergence}
\label{sec:impl}

There are many ways to measure the speed of convergence of 
the distribution of a Markov chain towards its stationary distribution.
Probably the most desirable quantity is the \emph{total variation distance} 
of $\nu P^n$ and $\pi$, i.e.
\[
\|\nu P^n - \pi\|_{\rm tv} \,=\, \sup_{A\subseteq K} |\nu P^n(A) - \pi(A)|,
\]
where $\nu P^n(A)=\int_K P^n(x,A) \dint\nu(x)$ is the distribution 
of the Markov chain with transition kernel $P$ and initial distribution $\nu$ 
after $n$ steps.
However, estimating the total variation distance is quite delicate 
and, in practice, it is usually much easier to derive bounds on it by 
more analytic quantities, 
like isoperimetric constants or certain norms of $P$, 
see e.g.~\cite{Lo99,LoVe06} and \cite{MaNo07}.
Many of these quantities are defined by 
\[
c_\mathcal{M}(P) = \inf_{f\in\mathcal{M}}\, \scalar{(I-P)f}{f}_\pi
\]
for certain sets of functions $\mathcal{M}\subset L_2(\pi)$, 
where $If:=f$. 
Hence, the proof of $P_1\le P_2$  is enough to obtain 
$c_\mathcal{M}(P_1)\le c_\mathcal{M}(P_2)$ for every choice of $\mathcal{M}$.
Prominent examples are the
\begin{itemize}
	\item spectral gap: $\mathcal{M}=\{f\colon \|f\|_\pi=1,\,\int_K f\dint \pi=0 \}$
	\item conductance: $\mathcal{M}=\{f\colon 
		f=\frac{\ind_A}{\sqrt{\pi(A)}},\;\pi(A)\in(0,1/2],\;A\subset K \}$
	\item log-Sobolev constant: $\mathcal{M}=\{f\colon
		\int_K f^2 \log(f^2/\|f\|_\pi^2)\dint\pi=1\}$.
\end{itemize}
There are some more quantities of this form, like 
the best constant in a Nash inequality, 
average and blocking conductance. For details see e.g.~\cite{Ch05} and 
\cite{MoTe06}.

In what follows, we will prove $P_1\le P_2$ for a couple 
of Markov operators and, hence, that $P_2$ is ``faster'' than $P_1$ 
in all the above senses.

\subsubsection{Consequences for sample averages}
The property $P_1\le P_2$
has also consequences for the worst case mean square error and the 
asymptotic stationary variance of Markov chain Monte Carlo methods. 

Let $(X^{(i)}_k)_{k\in\N}$ be a Markov chain with 
transition kernel $P_i$, $i=1,2$, and initial distribution $\pi$, 
and define the 
\emph{Markov chain Monte Carlo method} 
\[
S_n^{(i)}(f) = \frac1n\sum_{k=1}^n f(X^{(i)}_k), \qquad i=1,2,
\]
which gives an approximation to $Ef:=\int_K f \dint\pi$ for $f\in L_2(\pi)$. 
By virtue of spectral theoretic arguments\footnote{One argues with 
Corollary~3.27 and Lemma~2.12 from \cite{Ru12},
as well as the fact that 
the spectral gap of $P_1$ is smaller than that of $P_2$.
} 
one can show that
\[
 \sup_{\norm{f}_\pi \leq 1} \E\left| S_n^{(2)}(f) - Ef \right|^2
 \leq  \sup_{\norm{f}_\pi \leq 1}  \E\left| S_n^{(1)}(f) - Ef \right|^2.
\]

Another interesting consequence is stated in \cite[Theorem~6]{MiLe09}. 
There it is proven that $P_1\leq P_2$ 
\emph{if and only if} 
\[
\mathbb{V}(f,P_2) \leq \mathbb{V}(f,P_1),
\]
for each individual $f\in L_2(\pi)$,
where 
$\mathbb{V}(f,P_i):= \lim_{n\to \infty} n \cdot \mathbb{E}\vert S_n^{(i)}(f)-E f \vert^2 $
is the asymptotic stationary variance 
(which can also be considered as asymptotic mean square error).

\subsection{The algorithms}
We present 
the different algorithms we consider 
in detail and provide relevant literature.

\subsubsection{Hit-and-run algorithm}
The hit-and-run algorithm proposed by Smith 
\cite{Sm84} is well studied in different 
settings, see
\cite{BeRoSm93,DiFr97,KaSm98,Lo99,LoVe06,KiSmZa11,Ru13}.

Informally it samples at each step on a randomly chosen $1$-dimensional 
line with respect to the 
corresponding conditional distribution. 
Let $\sphere$ be
the Euclidean unit sphere in $\R^d$.
For $x\in K$ and $\theta\in \sphere$ we define
\[
  L(x,\theta) = \set{x+s\theta \in K  \mid s\in \R}
\]
as the \emph{chord through $x$ in direction $\theta$}.
A transition from $x$ to $y$ of the hit-and-run algorithm works as follows:
Generate a set $L(x,\theta)$ by choosing $\theta$ with the uniform distribution on the sphere
and, then, choose $y \in L(x,\theta)$ according to the distribution determined by $\rho$ conditioned
on the chord $L(x,\theta)$. 
A transition of hit-and-run is given by Algorithm~\ref{alg: har}.
\begin{algorithm}\label{alg: har} (Hit-and-run)\\ 
Transition from current state $x$ to next state $y$:
\begin{enumerate}
 \item Sample $\theta \sim \mbox{Uniform}( \sphere)$.
 \item Sample $y\sim H_\theta(x,\cdot)$, where
 \[
  H_\theta(x,A) = \frac{\int_{L(x,\theta)} \mathbf{1}_A(z) \rho(z) \dint z}{\int_{L(x,\theta)} \rho(z)\dint z}.
 \]
\end{enumerate} 
\end{algorithm}
Note that the integral in $H_\theta$ is over a $1$-dimensional subset of $\R^d$ and the
integration is with respect to the $1$-dimensional Lebesgue measure.
For $x\in K$ and $A\subseteq K$ the transition kernel, say $H$, 
of the hit-and-run algorithm is determined by
\[
 H(x,A)= \int_{\sphere} H_\theta(x,A) \dint \sigma(\theta),
\]
where $\sigma=\mbox{Uniform}( \sphere)$ 
denotes the uniform distribution on the sphere.
It is well known that this transition kernel is reversible with respect to $\pi$, see for example \cite{BeRoSm93}.

An important special case of the hit-and-run algorithm above
is given if the density is an indicator function, 
for example $\rho=\mathbf{1}_K$. 
In this case, the hit-and-run algorithm is reversible with respect to
the uniform distribution on 
$K$.
Thus, under weak
regularity assumptions, the uniform distribution is the (unique) stationary distribution, 
see \cite{BeRoSm93}. 
We call this special case uniform hit-and-run.
Let us mention that we use the uniform hit-and-run in the next section.

\subsubsection{Simple and hybrid slice sampling}
Slice sampling belongs to the class of
auxiliary variable algorithms that are defined by a Markov chain
on an extended state space, see \cite{Hi98,RoRo99,MiMoRo01,MiTi02,RoRo02,Ne03}
and the references therein. 

We consider the simple slice sampler and 
a hybrid slice sampler based on hit-and-run.
A single transition of the simple slice sampler is presented in Algorithm~\ref{alg: SS}.
\begin{algorithm} \label{alg: SS}
(Simple slice sampler)\\
Transition from current state $x$ to next state $y$.
\begin{enumerate}
 \item Sample $t\sim \mbox{Uniform}(0,\rho(x))$.
 \item Sample $y\sim \mbox{Uniform}(K(t))$, where
 \[
  K(t) = \{ x\in K\mid \rho(x)> t  \}
 \]
 is the super-level set of $\rho$ determined by $t$.
\end{enumerate}
\end{algorithm}

The transition kernel, say $S$, corresponding to Algorithm~\ref{alg: SS} is
\[
 S(x,A) = \frac{1}{\rho(x)} \int_0^{\rho(x)} \frac{\vol_d(A\cap K(t))}{ \vol_d(K(t))} \dint t,
\]
where $\vol_d$ denotes the $d$-dimensional Lebesgue measure.
The simple slice sampler exhibits quite robust convergence properties, see \cite{RoRo99,MiTi02}.
However, a crucial drawback is that the second step is difficult to implement.
Because of this we consider the 
following hybrid slice sampler based on hit-and-run.
The idea is to replace the second step of the simple slice sampler by a 
Markov chain transition according to the uniform hit-and-run algorithm in $K(t)$,
see Algorithm~\ref{alg: HSS}.
\begin{algorithm}\label{alg: HSS}
(Hybrid slice sampler based on hit-and-run)\\
Transition from current state $x$ to next state $y$.
\begin{enumerate}
 \item Sample $t\sim \mbox{Uniform}(0,\rho(x))$ and $\theta\sim \mbox{Uniform}(\sphere)$ 
 independently.
 \item Sample $y\sim \mbox{Uniform}\,(L_t(x,\t))$, where
      \[
       L_t(x,\theta) = \{ x+r\t \in K(t) \mid r \in \R  \}
      \]
      is the chord through $x$ in direction $\t$ restricted to $K(t)$.
\end{enumerate}
\end{algorithm}
The transition kernel, say $U$, of this hybrid slice sampler is given by
\[
 U(x,A) = \frac{1}{\rho(x)} \int_0^{\rho(x)} \int_{\sphere} 
 \frac{\abs{L_t(x,\theta)\cap A}}{\abs{L_t(x,\theta)}} \dint \sigma(\theta	) \dint t
\]
where $\abs{\cdot}$ here denotes the $1$-dimensional Lebesgue measure.
This modification is tempting since the uniform hit-and-run algorithm is, at least in some scenarios, implementable.
For example, when the super-level sets are convex ($\rho$ is quasi-conave) by a bisection method
one can roughly approximate the intersection points of the line $x+r \theta$ with $K(t)$ and use a $1$-dimensional acceptance/rejection
approach to sample the uniform distribution on $L_t(x,\theta)$.
Further, the simple slice sampler and the hybrid slice sampler are reversible with respect to $\pi$,
see for example in \cite{LaRu14}.

\subsubsection{Random walk Metropolis}
The random walk Metropolis in $\R^d$ provides an easy to implement method 
for Markov chain sampling. Further, it is well studied  
and different convergence results are known, see for example \cite{RoTw96,MeTw96,JaHa00}.

To guarantee that a certain operator is positive we 
consider a \emph{lazy version of a random walk Metropolis}.
Further, we assume in the following that
 $q\colon \R^d \to [0,\infty) $ is a rotational invariant probability 
 density on $\R^d$, i.e. 
 $q(r\theta_1) = q(r \theta_2)$ for $\theta_1 ,\theta_2 \in \sphere$. 
 The rotational invariance guarantees that 
$q$ is symmetric. 
 Let us provide two examples which satisfy the rotational invariance.
 \begin{ex} \label{ex:bw}
   Let $x\in \R^d$, $\delta>0$ and $\ball$ be the Euclidean unit ball 
   with 
   $\kappa_d:=\vol_d(\ball)$.
   Then we can set
	$q(x):= \mathbf{1}_{\ball}(\delta^{-1} x)/(\delta^d \kappa_d)$
   and the corresponding random walk Metropolis is known as $\delta$-ball walk.
 \end{ex}
 \begin{ex}\label{ex:normal}
   Again, let $x\in \R^d$ and set $q(x):=\exp(-\abs{x}^2/2)/(2 \pi)^{d/2}$. 
   The corresponding random walk Metropolis is known as Gaussian random walk.
 \end{ex}
In Example~\ref{ex:bw} the proposal $q$ depends on a parameter $\delta$. In connection to this
we want to mention that in recent years 
optimal scaling results concerning a parameter of 
the proposal of the random walk Metropolis attracted 
a lot of attention, see \cite{RoGeGi97,RoRo01,NeRoYu12}. 
 
Now a single transition of the (lazy) random walk Metropolis with proposal $q$
is described in Algorithm~\ref{alg: RWM}.
\begin{algorithm} \label{alg: RWM}
(Random walk Metropolis)\\
Transition from current state $x$ to next state $y$ with proposal $q$.
\begin{enumerate}
 \item Sample $z\sim q$ and $u_1,u_2\sim \mbox{Uniform}\,(0,1)$ independently.
 \item If $x+z\in K$,  $u_1 \leq 1/2$ and $u_2<\min\{1,\rho(x+z)/\rho(x)\}$ 
 then accept the proposal, and set $y := x+z$, else
reject the proposal and set $y:=x$.
\end{enumerate}
 \end{algorithm}
To simplify the notation we define the acceptance probability
of a proposed state $y=x+z$ as
\[
\a(x,y) \;=\; \min\Bigl\{1,\, \frac{\rho(y)}{\rho(x)}\Bigr\}
\]
for $x, y\in K$ and $\a(x,y)=0$ otherwise. 
Then, the transition kernel 
is given by 
\[\begin{split}
M(x,A) \;&=\; \frac{1}{2}\, \int_{K} \ind_A(y)\; 
							\a(x,y)\, q(y-x) \dint y,
\end{split}\]
for $A\subseteq K$ with $x\notin A$ 
and $M(x,\{x\})= 1-M(x,K\setminus \{x\})$.

\section{Auxiliary variable Markov chains} \label{sec:aux}

We develop a systematic approach how to compare Markov chains
which can be described by a suitable two-step procedure. 
That many Markov chains are of this form was already 
observed in \cite{AnDi07} and 
the idea of a comparison of this type was developed in 
\cite{Ul12, Ul14} in a specific setting.

For this
let $\A$ be an arbitrary (index) set
equipped with 
a $\sigma$-finite measure $\lambda$.
By assumption
we have a function $s\colon K\times\A \to [0,\infty)$ which satisfies:
\begin{itemize}
 \item for all $x\in K$ we have that $s(x,\cdot)$ is a probability density function
 according to $\lambda$;
 \item for all $a\in\A$ we have that $s(\cdot,a)$ is integrable according to $\pi$.
\end{itemize}
Define for almost all $a\in\A$ 
(concerning $\lambda$) 
a probability measure $\pi_a$ on $K$
induced by $s(x,a)$, i.e. 
\[
 \pi_a(A) = \frac{\int_A s(x,a) \dint \pi(x)}{ \int_K s(x,a)\dint \pi(x) }, \qquad 
 A\subseteq K.
\]
In addition, we assume that
\begin{itemize}
 \item for every $a\in\A$ we have an equivalence relation 
$\sim_a$ and by $[x]_a=\{y\in K\colon x\sim_a y\}$ we denote the equivalence class 
of $K$ with respect to $\sim_a$ to which $x$ belongs;
\item for ($\lambda$-almost) every $a\in\A$ we have a transition kernel 
$P_a$ on $(K, \mathcal{B}(K))$, such that 
$P_a(x,A)=0$ for each $x\in K$ and $A\subseteq K\setminus[x]_a$.
\end{itemize}
With this we can define a Markov chain
in $K$ for which a single transition, 
starting from $x\in K$, can be written 
as the following procedure:
\begin{itemize}
	\item[1)] Sample $a\in\A$ according to the distribution 
	with density $s(x,\cdot)$.
	\item[2)] Generate the next state with respect to 
						$P_{a}(x,\cdot)$.
\end{itemize}
That is, the transition kernel $P$ is given by
\begin{equation}\label{eq:repr}
P(x,A) \,=\, \int_\A P_a\bigl(x, A\bigr)  s(x,a)\dint\lambda(a)
\end{equation}
for $A\subseteq K$.

\begin{remark}
Since the support of the measure $P_a(x,\cdot)$ is contained in 
$[x]_a$ and $[x]_a=[y]_a$ if $y\in[x]_a$, 
we can interpret a Markov chain with transition kernel $P_a$ 
on $K$ and initial state $x$ as a Markov chain on $[x]_a$.
\end{remark}

Clearly, for every Markov chain a transition can be written in this form. 
(Simply, set $P_a(x,\cdot):=P(x,\cdot)$ and take $[x]_a\equiv K$ for arbitrary $\A$ and $\lambda$.)
However,
there might also be more interesting equivalence relations and scenarios
as we illustrate for the hit-and-run transition kernel.
\begin{ex}  \label{ex: har}
Here let $\A:=\sphere$ and $\lambda:=\sigma$. Then, for every $a\in\A$ let $x\sim_a y$ if and only if 
$x\in L(y,a)$, such that $[x]_a := L(x,a)$.  
Further, for $(x,a)\in K\times \A$ set $s(x,a):=1$, 
which implies $\pi_a = \pi$ for all $a\in\A$.
For ($\lambda$-almost every) $a\in \A$ we have that
\[
 H_{a}(x,A) = \frac{\int_{[x]_a} \rho(y) \mathbf{1}_A(y) \dint y}{\int_{[x]_a} \rho(y)\dint y}, \qquad A\subseteq K,
\]
is a transition kernel on $K$ and
we can write the Markov operator $H \colon L_2(\pi) \to L_2(\pi)$ of the
hit-and-run algorithm in the form \eqref{eq:repr} as
\[
 H f(x) = \int_\A \int_K f(y) \,H_a(x,\!\dint y) \dint\lambda (a). 
\]
\end{ex}

In some cases it is even possible to 
represent the transitions of two different Markov chains in the form $\eqref{eq:repr}$ 
with the same equivalence relations $\sim_a$ and $s(x,a)$, 
so that the corresponding ``inner'' transition kernels are much easier 
to analyze.
We show that a suitable relation of these ``inner'' 
kernels is enough to compare the original Markov chains.

\begin{lemma}  \label{lemma:main}
Assume that for ($\lambda$-almost) all $a\in \A$ 
there are transition kernels $P^{(1)}_{a},P^{(2)}_{a}$ on $K$ such that
 \begin{enumerate}
  \item\label{it:main_one} $P^{(1)}_a$ and $P^{(2)}_a$ are self-adjoint operators on $L_2(\pi_a)$;
  \item $P^{(1)}_a$ is positive on $L_2(\pi_a)$;
  \item\label{it:main_three} $P^{(1)}_a P^{(2)}_a = P^{(2)}_a$.
 \end{enumerate}
Then, for the operators $P_{1},P_{2}\colon L_2(\pi) \to L_2(\pi)$ defined by
\begin{align*}
 P_{i}f(x) & = \int_\A P^{(i)}_a f(x)\, s(x,a)\dint \lambda(a), \quad i\in\{1,2\}
\end{align*}
holds $
 P_{1} \leq P_{2}
$.
\end{lemma}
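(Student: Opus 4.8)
The plan is to reduce the global covariance inequality to a family of inequalities indexed by $a\in\A$, and then to settle each of these by a Hilbert space argument on $L_2(\pi_a)$.

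First I would unfold the definition of $P_i$ and interchange the order of integration. Writing $c(a)=\int_K s(x,a)\dint\pi(x)$ for the normalising constant of $\pi_a$, so that $s(x,a)\dint\pi(x)=c(a)\dint\pi_a(x)$, one obtains
\[
 \scalar{P_i f}{f}_\pi=\int_\A c(a)\,\scalar{P^{(i)}_a f}{f}_{\pi_a}\dint\lambda(a),\qquad i\in\{1,2\}.
\]
To make this legitimate I would first check, via Tonelli and the fact that $s(x,\cdot)$ is a probability density (so $\int_\A s(x,a)\dint\lambda(a)=1$), that $\int_\A c(a)\,\norm{f}_{\pi_a}^2\dint\lambda(a)=\norm{f}_\pi^2<\infty$; this shows $f\in L_2(\pi_a)$ for $\lambda$-almost every $a$ and, together with the contractivity $\norm{P^{(i)}_a f}_{\pi_a}\le\norm{f}_{\pi_a}$ of $\pi_a$-stationary Markov operators and Cauchy--Schwarz, justifies the Fubini interchange. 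Consequently it suffices to prove the pointwise statement $\scalar{P^{(1)}_a f}{f}_{\pi_a}\ge\scalar{P^{(2)}_a f}{f}_{\pi_a}$ for almost all $a$.

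The core of the argument is therefore an operator inequality on the fixed Hilbert space $H:=L_2(\pi_a)$, and in this paragraph $\scalar{\cdot}{\cdot}$ and $\norm{\cdot}$ denote its inner product and norm. Abbreviate $A:=P^{(1)}_a$ and $B:=P^{(2)}_a$. By reversibility \eqref{it:main_one} both $A$ and $B$ are self-adjoint; being Markov operators on $H$ they are contractions, so $\norm{B}\le1$ and, combined with positivity, $0\le A\le I$. Taking adjoints in hypothesis \eqref{it:main_three}, $AB=B$, and using self-adjointness yields $BA=B$ as well. I would then run Cauchy--Schwarz for the positive semidefinite form $\l u,v\r_A:=\scalar{Au}{v}$: since $AB=B$ gives $\l Bf,f\r_A=\scalar{ABf}{f}=\scalar{Bf}{f}$ and $\l Bf,Bf\r_A=\scalar{ABf}{Bf}=\norm{Bf}^2$, while $BA=B$ together with $\norm{B}\le1$ and $A^2\le A$ gives $\norm{Bf}^2=\norm{BAf}^2\le\norm{Af}^2=\scalar{A^2f}{f}\le\scalar{Af}{f}$, I obtain
\[
 \scalar{Bf}{f}=\l Bf,f\r_A\le\l Bf,Bf\r_A^{1/2}\,\l f,f\r_A^{1/2}\le\scalar{Af}{f}.
\]
This is exactly the pointwise inequality, and integrating it against $c(a)\dint\lambda(a)$ yields $P_1\ge P_2$.

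The genuinely delicate points are the measure-theoretic ones rather than the final estimate: I expect the main care to go into verifying that $\pi_a$ and the operators $P^{(i)}_a$ are well defined for $\lambda$-almost every $a$, that $a\mapsto\scalar{P^{(i)}_a f}{f}_{\pi_a}$ is measurable, and that the Fubini interchange above is valid; the normalisation $\int_\A s(x,a)\dint\lambda(a)=1$ is what makes all the integrability bookkeeping close. An alternative to the Cauchy--Schwarz computation would be to note that $AB=BA=B$ makes $A$ and $B$ commute and to invoke the joint spectral theorem: representing $A$ and $B$ as multiplication by $\alpha\in[0,1]$ and $\beta\in[-1,1]$, the relation $\alpha\beta=\beta$ forces $\beta=0$ or $\alpha=1$ at each point, whence $\alpha\ge\beta$; but the elementary estimate above avoids this machinery.
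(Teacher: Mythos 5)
Your argument is correct, and it reaches the conclusion by a route that is organized differently from the paper's. The paper lifts everything to the joint space $L_2(\mu)$ on $K\times\A$ via $\mu(\dint x,\dint a)=s(x,a)\dint\lambda(a)\dint\pi(x)$, writes $P_i=R\wt P_i R^*$ with $\wt P_i g(x,a)=P^{(i)}_a g_a(x)$, verifies self-adjointness, positivity and the absorption relation for the lifted operators, and then concludes with the positive square root $N$ of $\wt P_1$: $\scalar{\wt P_2 R^*f}{R^*f}_\mu=\scalar{\wt P_2 NR^*f}{NR^*f}_\mu\le\norm{NR^*f}_\mu^2=\scalar{\wt P_1R^*f}{R^*f}_\mu$. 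You instead disintegrate immediately, reducing to the fiberwise inequality $\scalar{P^{(1)}_af}{f}_{\pi_a}\ge\scalar{P^{(2)}_af}{f}_{\pi_a}$, and settle it by Cauchy--Schwarz for the positive form $\scalar{A\,\cdot}{\cdot}$ together with $AB=BA=B$, $\norm{B}\le1$ and $A^2\le A$. The two proofs carry the same mathematical content: your Fubini interchange is exactly the computation the paper performs to show $\wt P_i$ is self-adjoint (including the observation that $f\in L_2(\pi_a)$ for $\lambda$-a.e.\ $a$), and your Cauchy--Schwarz step is the elementary counterpart of the paper's square-root/commutation step. What your version buys is economy: no auxiliary Hilbert space, no operators $R,R^*$, and no appeal to the commutation property of the square root; what the paper's version buys is the explicit factorization $P_i=R\wt P_iR^*$, which exhibits the two chains as projections of auxiliary-variable chains and is conceptually aligned with the rest of Section~3. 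Your measure-theoretic caveats (a.e.\ well-definedness of $\pi_a$ and $P^{(i)}_a f$, measurability in $a$) are the right ones to flag, and they are handled at the same level of rigour as in the paper.
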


\begin{proof}
 
 First, we show that $P_i$ can be written 
 as a product of suitable operators.
 For this let $\mu$ be a probability measure on $K \times \A$ that is given by 
 \begin{equation}\label{eq:mu}
\mu(B) \,:=\, \int_K \int_\A \,\ind_B(x,a)\, s(x,a)\dint\lambda(a) \dint\pi(x)
\end{equation}
for $B\subseteq K\times\A$, 
and let $L_2(\mu)$ be the Hilbert space of 
functions $g\colon K\times \A\to\R$ 
with finite norm $\norm{g}_\mu= \scalar{g}{g}_\mu ^{1/2}$,
where the inner-product of $g,h\in L_2(\mu)$ is defined by
\[\begin{split}
  \scalar{g}{h}_\mu 
	&= \int_{K\times\A} g(x,a)\, h(x,a) \dint\mu(x,a) \\
	&= \int_K \int_{\A} 
  g(x,a)\, h(x,a)\, s(x,a) \dint\lambda(a)\dint\pi(x).
\end{split}\]
Further, let $R \colon L_2(\mu) \to L_2(\pi)$ be the operator that is given by
\begin{equation}\label{eq:R}
R g(x) \,=\,  \int_{\A} g(x,a)\,s(x,a) \dint\lambda(a), \qquad g\in L_2(\mu).
\end{equation}
Since
\[
\scalar{f}{R g}_\pi \,=\,  \int_K \int_{\A} 
	f(x)\, g(x,a)\,s(x,a) \dint\lambda(a) \dint\pi(x),
\]
we obtain that the adjoint operator $R^*\colon L_2(\pi) \to L_2(\mu)$ satisfies 
\begin{equation} \label{eq:R*}
R^*f(x,a) \,=\, f(x), \qquad f\in L_2(\pi), 
\end{equation}
for $a\in\A$ and $x\in K$. 
It is useful to write $g_a(x)=g(x,a)$ for $g\in L_2(\mu)$ and fixed $a\in \A$. 
Clearly, $g_a\in L_2(\pi_a)$ for almost every $a\in\A$ (with respect to~$\lambda$), 
such that $P^{(i)}_a g_a$ is well-defined. Let
\begin{equation}\label{eq:P_tilde}
\wt P_i\, g(x,a) \,=\, \int_K g(y,a) \,P^{(i)}_a(x,\!\dint y), \qquad g\in L_2(\mu),
\end{equation}
and note that the operator satisfies $\wt P_i\colon L_2(\mu)\to L_2(\mu)$. 
An immediate consequence of the definitions is 
\begin{equation}\label{eq:prod}
P_i \,=\, R \wt P_i R^*, \qquad i\in\{1,2\}.
\end{equation}

We 
prove different properties of $\wt P_1$ and $\wt P_2$.
For $i\in \{1,2\}$ we show that $\wt P_i$ is self-adjoint on $L_2(\mu)$.
Note that by the assumptions we know that $P^{(i)}_a$ is self-adjoint on $L_2(\pi_a)$.
With 
\[
 C_a= \int_K s(x,a)\dint \pi(x)
\]
we have for $g,h\in L_2(\mu)$ that
\begin{align*}
 \scalar{\wt P_i g}{h}_\mu 
 & = \int_K \int_{\A} \int_K g(y,a)\,P^{(i)}_a(x,\!\dint y)\, h(x,a)\, s(x,a)\dint \lambda(a)\dint \pi(x)\\
 & = \int_\A C_a \int_K \int_K g(y,a)\,P^{(i)}_a(x,\!\dint y)\, h(x,a)\,\dint \pi_a(x) \dint\lambda(a)\\
 & = \int_\A C_a \scalar{P^{(i)}_a g_a}{h_a}_{\pi_a} \dint \lambda(a)
   = \int_\A C_a \scalar{ g_a}{P^{(i)}_a h_a}_{\pi_a} \dint \lambda(a)\\
 & = \scalar{g}{\wt P_i h}_\mu.
 \end{align*}
By the same line of arguments, for $g\in L_2(\mu)$  we have
\begin{align*}
  \scalar{\wt P_1 g}{g}_\mu = \int_\A C_a \scalar{P^{(1)}_a g_a}{g_a}_{\pi_a} \dint \lambda(a)
  \geq 0,
\end{align*}
such that $\wt P_1$ preserves the positivity of $P_a^{(1)}$, i.e. $\wt P_1$ is positive on $L_2(\mu)$. 

Further, for $g\in L_2(\mu)$ we obtain
\begin{align*}
   \wt P_1 \wt P_2\, g (x,a) = P^{(1)}_a P^{(2)}_a g_a(x) =  P^{(2)}_a  g_a(x) = \wt P_2\, g(x,a).
\end{align*}
By the self-adjointness of $\wt P_1$ and $\wt P_2$ we also have $\wt P_2 \wt P_1 = \wt P_2$.

Now we gathered all tools together to prove the assertion.
By the positivity, we know that $\wt P_1$ has a unique positive square root $N$, 
i.e. $N^2=\wt P_1$. It is well known that $N$ commutes with every 
operator that commutes with 
$\wt P_1$
, see e.g.~\cite[Theorem~9.4-2]{Kr89}, 
in particular $N\wt P_2=\wt P_2 N$. 
We obtain
\[\begin{split}
\scalar{P_2f}{f}_\pi &= \scalar{R\wt P_2 R^*f}{f}_\pi = \scalar{\wt P_2 R^*f}{R^*f}_\mu \\
&= \scalar{\wt P_1\wt P_2 R^*f}{R^*f}_\mu = \scalar{\wt P_2 N R^*f}{N R^*f}_\mu \\
&\le \scalar{N R^*f}{N R^*f}_\mu = \scalar{\wt P_1 R^*f}{R^*f}_\mu 
= \scalar{P_1 f}{f}_\pi, 
\end{split}\]
where the inequality comes from $\|\wt P_2\|\le1$, which is true since $\wt P_2$ is a 
Markov operator.\\
\end{proof}
Thus,
for each of the intended comparisons, say between $P_1$ and $P_2$, we have to
\begin{itemize}
  \item find representations 
	\[
	 P_i f(x) = \int_\A P_a^{(i)} f(x) s(x,a)\dint \lambda(a), \quad i\in \{1,2\};
	\]
  \item check the assumptions \ref{it:main_one}.-\ref{it:main_three}. of Lemma~\ref{lemma:main}, 
  	i.e.~reversibility of $P_a^{(i)}$ 
  	with respect to $\pi_a$, positivity of $P^{(1)}_a$ 
  	and $P^{(1)}_a  P^{(2)}_a = P^{(2)}_a$.      
\end{itemize}

\bigskip

\begin{remark}
Let us comment on the necessity and possible generalizations of 
the assumptions in Lemma~\ref{lemma:main}.
The second assumption, i.e., the positivity of $P^{(1)}_a$, is most likely 
an artifact of the proof technique and we conjecture that the result holds 
without this assumption. However, in the proof it is essential and we were not able to remove it. 
The same problem already appeared in~\cite{Ul12,Ul14}. 
The third assumption of the lemma says, roughly speaking, 
that a step of the Markov chain with corresponding operator $P^{(1)}_a$ 
``cannot be seen'' if followed by $P^{(2)}_a$. 
As the last steps of the proof show, this could be replaced, 
e.g., by the weaker assumption that $\wt P_1 \le \wt P_2$, i.e., 
$\scalar{\wt P_2 g}{g}_\mu \le \scalar{\wt P_1 g}{g}_\mu$ 
for all $g\in L_2(\mu)$. 
However, at least in the examples we consider, 
the relatively easy-to-check assumption 
\ref{it:main_three}.
is already fulfilled.
\end{remark}

\section{Main result}\label{sec:main}
In this section we apply Lemma~\ref{lemma:main} to prove the following theorem.
\begin{theorem}  \label{thm:main}
Let $M, U, H, S$ be the Markov operators of the (lazy) random walk Metropolis with rotational invariant
proposal $q$, the hybrid slice sampler based on hit-and-run, the hit-and-run algorithm and the simple slice sampler.
Then
\begin{align*}
  M & \leq U \leq H,\\
  M & \leq U \leq S.
 \end{align*}
\end{theorem}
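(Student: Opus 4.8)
The plan is to obtain each of the three non-trivial relations $U \geq H$, $U \geq S$ and $M \geq U$ by a single application of Lemma~\ref{lemma:main}; together these three links yield both displayed chains. In every case the guiding principle is the same: the more efficient operator (the one playing the role of $P_2$) is built from an inner kernel $P_a^{(2)}$ that, on the equivalence class $[x]_a$, jumps directly to the conditional stationary law, i.e.\ is the conditional-expectation projection onto functions constant on $[x]_a$, whereas the less efficient operator ($P_1$) uses an inner kernel $P_a^{(1)}$ that is merely a $\pi_a$-reversible Markov kernel on the same class. Because a Markov kernel fixes functions that are constant on its equivalence class, the crucial algebraic hypothesis \eqref{it:main_three}, $P_a^{(1)}P_a^{(2)} = P_a^{(2)}$, holds automatically, and the task reduces to exhibiting a common decomposition \eqref{eq:repr} with the same $\A$, $\lambda$ and $s$, then checking reversibility and the positivity of $P_a^{(1)}$.

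For $U\geq S$ I would take $\A=[0,\infty)$, $\lambda$ the Lebesgue measure and $s(x,t)=\ind_{K(t)}(x)/\rho(x)$; a direct computation using $\dint\pi\propto\rho\dint x$ shows this is a probability density in $t$ and that the induced measure $\pi_t$ is exactly the uniform distribution on $K(t)$. Then $S$ decomposes with inner kernel $P_t^{(2)}(x,\cdot)=\pi_t$ (the simple-slice jump) and $U$ with inner kernel $P_t^{(1)}$ equal to uniform hit-and-run on $K(t)$. Both are $\pi_t$-reversible, $P_t^{(2)}$ is the projection onto constants so \eqref{it:main_three} is immediate, and $P_t^{(1)}$ is positive because uniform hit-and-run is an average over $\theta\in\sphere$ of conditional-expectation projections on $K(t)$. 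For $U\geq H$ I would instead take $\A=\sphere$, $\lambda=\sigma$ and $s\equiv1$, so that $\pi_\theta=\pi$; then $H$ has inner kernel $H_\theta$ (the $\pi$-conditional jump along the chord, again a projection) and $U$ has inner kernel $U_\theta$, the one-dimensional simple slice sampler along $L(x,\theta)$. Reversibility and \eqref{it:main_three} are as before, and positivity of $U_\theta$ follows from the standard data-augmentation identity writing a slice/two-stage Gibbs operator as $\Phi^*\Phi$, with $\Phi$ the conditional expectation onto the auxiliary level variable.

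The relation $M\geq U$ is the substantive one and requires interpreting the random walk Metropolis as a slice sampler. The key identity is that, for a proposed $y=x+z$,
\[
\frac{1}{\rho(x)}\int_0^{\rho(x)} \ind_{K(t)}(y)\dint t \;=\; \min\Bigl\{1,\tfrac{\rho(y)}{\rho(x)}\Bigr\}\;=\;\a(x,y),
\]
so that $M$ is exactly the hybrid sampler that first draws $t\sim\mbox{Uniform}(0,\rho(x))$ and then performs, on $K(t)$, one lazy random walk step with proposal $q$ targeting the uniform law. Since $q$ is rotational invariant the proposal direction is uniform on $\sphere$, so both $M$ and $U$ admit a common decomposition over the combined index $a=(t,\theta)$ with $s(x,t,\theta)=\ind_{K(t)}(x)/\rho(x)$ and $\pi_{(t,\theta)}=\pi_t=$ uniform on $K(t)$; the inner kernels act only along the chord $L_t(x,\theta)$. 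Here $U$ has inner kernel $U_{t,\theta}$, the uniform jump onto $L_t(x,\theta)$ (a projection), while $M$ has inner kernel $M_{t,\theta}$, the lazy line-restricted Metropolis step. Condition \eqref{it:main_three}, $M_{t,\theta}U_{t,\theta}=U_{t,\theta}$, again holds because $U_{t,\theta}$ produces chord-constant functions that $M_{t,\theta}$ fixes, and positivity of $M_{t,\theta}$ is exactly what laziness buys: the acceptance rule gives $M_{t,\theta}=\tfrac12(I+M_{0,t,\theta})$ with $M_{0,t,\theta}$ a self-adjoint Markov operator of norm at most one, whence $\scalar{M_{t,\theta}f}{f}\ge0$.

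The main obstacle is the $M\geq U$ case, on two counts. First, the reinterpretation of Metropolis as a slice step (the displayed acceptance identity) is what places $M$ and $U$ on a common footing at all; without it there is no shared decomposition. Second, one must choose the decomposition at the right granularity: at the coarse level index $t$ alone the hit-and-run inner kernel of $U$ is not a projection, so hypothesis \eqref{it:main_three} fails, and only after refining to the chord level $(t,\theta)$ does the hit-and-run piece become the genuine uniform jump that the Metropolis step fixes. The remaining effort is bookkeeping: confirming that the line- and level-restricted kernels are reversible with respect to the correct $\pi_a$ (a routine consequence of their being supported on the equivalence classes) and verifying in each of the three cases the appropriate positivity statement, noting that rotational invariance of $q$ and the laziness of $M$ are used precisely to secure the hypotheses of Lemma~\ref{lemma:main}.
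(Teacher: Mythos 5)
Your proposal is correct and follows essentially the same route as the paper: the same three pairwise applications of Lemma~\ref{lemma:main}, with the same choices of $\A$, $\lambda$, $s$ and equivalence classes (chords for $U\ge H$, level sets for $U\ge S$, chord-in-level-set pairs $(t,\theta)$ for $M\ge U$), the same slice reinterpretation of the acceptance probability, and laziness plus rotational invariance used exactly where the paper uses them. The only difference is cosmetic: you argue positivity of the inner kernels via projection/Gibbs-sampler ($\Phi^*\Phi$) structure, where the paper verifies the same nonnegativity by direct computation of the quadratic forms as integrals of squares.
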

Before proving the different inequalities in the statement 
we start with recalling some notion
and state a useful lemma.
Recall that $\sphere$ denotes the Euclidean unit sphere in $\R^{d}$
and $\sigma$ is the uniform distribution on $\sphere$.
For $\theta \in \sphere$, $x\in \R^d$ and $t\in [0,\infty)$ let
\[
 K(t)=\{ x\in K\mid \rho(x)> t\}
\]
be the super-level set of $\rho$ determined by $t$ 
and 
\[
 L_t(x,\theta) = \{ x+\theta r\in K(t) \mid r\in \R\}
\]
be the chord in $K(t)$ through $x$ in direction $\theta$.
Moreover, define  $L(x,\theta):=L_0(x,\theta)$ and, 
for a set $A\subseteq K = K(0)$, let
 \[
  \Pi_{\theta}(A) = \{y \in \R^d \mid y \bot \theta,\; 
  L(y,\theta)\cap A \neq \emptyset \}
 \]
be the orthogonal projection of $A$ to the hyperplane that is orthogonal to $\theta$.
We obtain the following useful results by an
application of the Theorem of Fubini and the integral transformation to polar coordinates.

\begin{lemma}
 Let $t\ge0$ and $\theta \in \sphere$. 
For Lebesgue integrable $f\colon K(t) \to \R$ we have 
 \begin{equation}  \label{eq:fubini}
   \int_{K(t)} f(x)\dint x = \int_{\Pi_{\theta}(K(t)) } 
   \int_{L_t(x,\theta)} f(y)\dint y \dint x,
 \end{equation}
and 
for fixed $x\in \R^d$ holds
 \begin{equation} \label{eq:polar}
  \int_{K(t)} f(y)\dint y = 
  \frac{d\kappa_d}{2 } \int_{\sphere} \int_{L_t(x,\theta)} f(y) \abs{x-y}^{d-1} 
  \dint y \dint \sigma(\theta)
 \end{equation}
 where $\kappa_d=\vol_d(\ball)$ denotes the volume of the $d$-dimensional Euclidean unit ball.
\end{lemma}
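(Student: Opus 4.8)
The plan is to obtain both identities from standard change-of-variables formulas, applied to the integrand $f$ extended by zero outside $K(t)$, so that $\int_{K(t)} f = \int_{\R^d} f\,\ind_{K(t)}$. Throughout I fix a direction $\theta\in\sphere$ and a level $t\ge0$, and I use that the chord $L_t(x,\theta)$ is exactly the slice $\{x+r\theta : r\in\R\}\cap K(t)$, parametrised by signed arc-length $r$; since $\theta$ is a unit vector this parametrisation is unit-speed, so integrating a function over $L_t(x,\theta)$ with respect to $1$-dimensional Lebesgue measure amounts to integrating it against $\dint r$.

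For \eqref{eq:fubini} I would use the orthogonal decomposition $\R^d=\theta^\perp\oplus\R\theta$: every $z\in\R^d$ is uniquely $z=x+r\theta$ with $x\in\theta^\perp$ and $r\in\R$, and this is an orthonormal (hence measure-preserving) change of coordinates. Applying Fubini to $z\mapsto f(z)\,\ind_{K(t)}(z)$ in these coordinates gives
\[
  \int_{K(t)} f(z)\dint z = \int_{\theta^\perp}\int_\R f(x+r\theta)\,\ind_{K(t)}(x+r\theta)\dint r\dint x.
\]
For fixed $x$ the inner integral is precisely $\int_{L_t(x,\theta)} f$, since $\{r:x+r\theta\in K(t)\}$ traces out the chord; moreover this inner integral vanishes unless the chord is nonempty, i.e. unless $x\in\Pi_\theta(K(t))$ (here $L_t(x,\theta)=L(x,\theta)\cap K(t)$ because $K(t)\subseteq K$). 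Restricting the outer integral to $\Pi_\theta(K(t))$ yields \eqref{eq:fubini}; the only thing to verify is this routine identification of the projected base set with the effective support of the outer integrand.

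For \eqref{eq:polar} I would switch to polar coordinates centred at the fixed point $x$. Writing $\omega$ for the unnormalised spherical surface measure, the classical formula reads $\int_{\R^d} g = \int_{\sphere}\int_0^\infty g(x+r\theta)\,r^{d-1}\dint r\dint\omega(\theta)$. Applying it to $g=f\,\ind_{K(t)}$, and writing the chord integral as $\int_{L_t(x,\theta)} f(y)\abs{x-y}^{d-1}\dint y = \int_\R f(x+r\theta)\,\ind_{K(t)}(x+r\theta)\,\abs{r}^{d-1}\dint r$, I would split the latter into its $r>0$ and $r<0$ parts; after the substitution $\theta\mapsto-\theta$ in the $r<0$ part and using the invariance of $\omega$ (equivalently $\sigma$) under the antipodal map, integrating the chord integral over $\sphere$ produces exactly twice the one-sided polar integral. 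The main obstacle — indeed the only delicate point — is getting the constant right: the factor $2$ arises because the chord sweeps both signs of $r$ while the spherical integration already covers every direction, and the factor $d\kappa_d$ comes from the normalisation $\omega=d\kappa_d\,\sigma$, i.e. the surface area of $\sphere$ equals $d\kappa_d$. Tracking these two bookkeeping factors gives $\int_{K(t)} f(y)\dint y = \tfrac{d\kappa_d}{2}\int_\sphere\int_{L_t(x,\theta)} f(y)\abs{x-y}^{d-1}\dint y\dint\sigma(\theta)$, which is \eqref{eq:polar}.
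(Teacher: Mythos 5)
Your proposal is correct and follows exactly the route the paper indicates (it gives no detailed proof, merely invoking Fubini with the decomposition $\R^d=\theta^\perp\oplus\R\theta$ and the polar-coordinate transformation centred at $x$). Your accounting of the constant $\tfrac{d\kappa_d}{2}$ via the antipodal symmetry and the normalisation of the surface measure is precisely the bookkeeping the paper leaves implicit.
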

 Note that in both identities the inner integral on the ride-hand-side is over
 a $1$-dimensional subset of $\R^d$ and the integration is with respect to
 the $1$-dimensional Lebesgue measure.

The different inequalities in Theorem~\ref{thm:main} will be proven in the following sections.
There we use the notation of Section~\ref{sec:aux}.

\subsection{Hit-and-run vs.~hybrid slice sampler:}\label{subsec:har}
For the hit-and-run we use the scenario described in Example~\ref{ex: har}.
Hence, let $\A:=\sphere$ and $\lambda:=\sigma$. 
Further, for $(x,a)\in K\times \A$ set $s(x,a):=1$ such as 
$
 [x]_a := L_0(x,a).
$
This implies $\pi_a = \pi$ for all $a\in\A$.
By 
\[
 H_{a}(x,A) = \frac{\int_{[x]_a} \rho(y) \mathbf{1}_A(y) \dint y}{\int_{[x]_a} \rho(y)\dint y}, \qquad A\subseteq K,
\]
we can write the Markov operator $H \colon L_2(\pi) \to L_2(\pi)$ of the
hit-and-run algorithm as
\[
 H f(x) = \int_\A \int_K f(y) \,H_a(x,\!\dint y) \dint\lambda (a). 
\]
Let
\[
 U_a(x,A) = \frac{1}{\rho(x)} \int_0^{\rho(x)} \int_{L_t(x,a)} \, \frac{\mathbf{1}_A(y)}{\abs{L_t(x,a)}}\, \dint y \dint t
\]
and observe that $L_t(x,a) = [x]_a \cap K(t)$.
With this we can write  
the Markov operator $U \colon L_2(\pi) \to L_2(\pi)$ of the hybrid slice sampler based on hit-and-run as
\[
 U f(x) = \int_A \int_K f(y)\, U_a(x,\!\dint y) \dint \lambda(a).  
\]
Thus we have a common representation.
Now we check the assumptions \eqref{it:main_one}.-\eqref{it:main_three}. 
of Lemma~\ref{lemma:main}:
\begin{enumerate}
 \item To the reversibility of $H_a$ and $U_a$ with respect to $\pi_a$:\\
 We only show reversibility of $U_a$, since reversibility of $H_a$ follows by the same line of arguments.
 For $A,B\subseteq K$
 it is enough to prove
 \begin{align} \label{al:to_prove}
      \int_A U_a(x,B)\rho(x) \dint x = \int_B U_a(x,A) \rho(x) \dint x.
 \end{align}
 By the application of \eqref{eq:fubini} with $t=0$, 
 the equality $\mathbf{1}_{[0,\rho(y))}(t)=\mathbf{1}_{K(t)}(y)$ 
 and the fact that $y\in [x]_a$ implies $L_t(y,a)=L_t(x,a)$ we obtain
 \[\begin{split}
 \int_A U_a&(x,B) \rho(x) \dint x
   = \int_{\Pi_{a}(K) }\int_{[x]_a} \mathbf{1}_A(y) U_a(y,B) \rho(y)  \dint y \dint x \\
 & = \int_{\Pi_{a}(K)}\int_{[x]_a} \mathbf{1}_A(y) \int_0^{\rho(y)} \int_{L_t(y,a)} \, \frac{\mathbf{1}_B(z)}{\abs{L_t(y,a)}}\,  \dint z \dint t \dint y \dint x  \\
& = \int_{\Pi_{a}(K)} \int_0^{\infty} \frac{1}{\abs{L_t(x,a)}} \,
	\abs{L_t(x,a)\cap A}\,\abs{L_t(x,a)\cap B}\, \dint t\dint x.
\end{split}\]
 Observe that in the right-hand-side $A$ and $B$ are interchangeable 
which proves \eqref{al:to_prove}.
\item To the positivity of $U_a$: \\
 By similar arguments as in the proof of \eqref{al:to_prove} we obtain
 with $c=\int_K \rho(x)\dint x$ that
 \begin{align*}
  \scalar{U_a f}{f}_\pi 
 & =\frac{1}{c} \int_{\Pi_{a}(K)} \int_0^{\infty} \frac{1}{\abs{L_t(x,a)}} \left(\int_{L_t(x,a)} f(y)\dint y \right)^2 \dint t \dint x
 \geq 0. 
 \end{align*}
\item To $U_a H_a = H_a$:\\
	From the definition of the Markov kernels $H_a$ it is obvious that 
	$y\in [x]_a$ implies $H_a(y,A)=H_a(x,A)$ for all $A\subseteq K$. 
	This implies 
	\[
	U_a H_a(x,A) = \frac{1}{\rho(x)} \int_0^{\rho(x)} \int_{L_t(x,a)} H_a(y,A) \frac{\dint y}{\abs{L_t(x,a)}} \dint t 
	= H_a(x,A)
	\]
	and hence $U_a H_a = H_a$.
\end{enumerate}
Thus, as a direct consequence of Lemma~\ref{lemma:main} we obtain $U\leq H$.

\subsection{Simple vs.~hybrid slice sampler:}

Here, we derive another representation of the hybrid slice sampler
adapted to simple slice sampling.

For this let $\A := [0,\infty)$ 
and $\lambda$ be the $1$-dimensional Lebesgue measure.
Further, for $(x,a)\in K\times \A$ set 
$s(x,a):=\mathbf{1}_{[0,\rho(x))}(a)/\rho(x)$ such as $[x]_a := K(a).$
Observe that
\[
 \pi_a(A)  = \frac{\vol_d(A\cap K(a))}{\vol_d(K(a))}.
\]
For any $x\in K$ let
\[
 S_{a}(x,A) = \pi_a(A),\quad A\subseteq K.
\]
Clearly, the Markov operator $S \colon L_2(\pi) \to L_2(\pi)$ of the
simple slice sampler can be written as
\[
 S f(x) = \int_\A \int_K f(y) \,S_a(x,\!\dint y)\, s(x,a) \dint\lambda (a). 
\]
Note that, with these notations, we have $L_a(x,\theta)=[x]_a \cap L(x,\theta)$ 
and let
\[
 U_a(x,A) = \int_{\sphere} \int_{L_a(x,\theta)} 
  \frac{\mathbf{1}_A(y)}{\abs{L_a(x,\theta)}}\,\dint y \dint \sigma(\theta).
\]
With this we can write  
the Markov operator $U \colon L_2(\pi) \to L_2(\pi)$ of the hybrid slice sampler based 
on hit-and-run as
\[
 U f(x) = \int_\A \int_K f(y)\, U_a(x,\!\dint y) s(x,a) \dint \lambda(a).  
\]
Again, we have a common representation and it remains to check
the assumptions 
\ref{it:main_one}.-\ref{it:main_three}. of Lemma~\ref{lemma:main}:
\begin{enumerate}
 \item To the reversibility of $S_a$ and $U_a$ with respect to $\pi_a$:\\
 Since $S_a(x,A)=\pi_a(A)$ reversibility of $S_a$ is obvious.
 Observe that $\pi_a$ is the uniform distribution on $[x]_a=K(a)$ and
 $U_a(x,\cdot)$ performs a uniform hit-and-run step on $K(a)$ which is known to be reversible, 
 see for example \cite[Lemma~4.10]{Ru12}.
 \item To the positivity of $U_a$:\\
  With $c=\vol_d(K(a))$, by the application of \eqref{eq:fubini} 
  and by the fact that for $y\in L_a(x,\theta)$ follows $L_a(x,\theta) =L_a(y,\theta)$ holds
  \begin{align*}
   \scalar{U_a f&}{f}_{\pi_a} 
    = \frac{1}{c} \int_{K(a)}  \int_{\sphere} \int_{L_a(x,\theta)} 
   \frac{f(z)f(x) }{\abs{L_a(x,\theta)}}\dint z \dint \sigma (\theta) \dint x \\
   & = \frac{1}{c} \int_{\sphere} \int_{\Pi_{\theta}(K(a))} 
   \int_{L_a(x,\theta)} 
    \int_{L_a(y,\theta)}  \frac{f(y)f(z)}{\abs{L_a(y,\theta)}} \dint z \dint y \dint x \dint \sigma(\theta) \\
  & = \frac{1}{c} \int_{\sphere} \int_{\Pi_{\theta}(K(a))} 
  \frac{1}{\abs{L_a(x,\theta)}}
  \left(\int_{L_a(x,\theta)} f(y)
       \dint y \right)^2 \dint x \dint \sigma(\theta)
       \geq 0.
    \end{align*}

 \item To $U_a S_a = S_a$:\\
  This follows immediately from $S_a=\pi_a$ and the reversibility of $U_a$ with respect to $\pi_a$.
\end{enumerate}
Thus, as a direct consequence of Lemma~\ref{lemma:main} we obtain $U\leq S$.

\subsection{Hybrid slice sampler vs.~Metropolis:}

Again we need a suitable representation for the 
hybrid slice sampler based on hit-and-run.

Here, let $\A := \sphere \times [0,\infty)$ and $\lambda$
be the product measure of $\sigma$ and the $1$-dimensional Lebesgue measure.
Further, for $x\in K$ and $(a_1,a_2)\in \A$, set $s(x,a_1,a_2) = \mathbf{1}_{[0,\rho(x)]}(a_2)/\rho(x)$
such as $[x]_{(a_1,a_2)}:= L_{a_2}(x,a_1)$.
Observe that
\[
 \pi_{(a_1,a_2)} (A) = \frac{\vol_d(A\cap K(a_2))}{\vol_d(K(a_2))}, \quad A\subseteq K, 
\]
is the uniform distribution in $K(a_2)$. 
By
\[
 U_{(a_1,a_2)}(x,A) = \int_{[x]_{(a_1,a_2)}} \mathbf{1}_A(y) \frac{\dint y}{\abs{[x]_{(a_1,a_2)}}}
\]
we have a representation 
of the Markov operator $U\colon L_2(\pi) \to L_2(\pi)$ of the hybrid slice sampler by
\[
 Uf(x) = \int_\A \int_K f(y) U_{a}(x,\!\dint y) s(x,a) \dint \lambda (a)
\]
with $a\in\A$.
Now we have to represent the random walk Metropolis in the same fashion.
For this let us define
\[
 \eta(x,y) = \frac{d \kappa_d}{2}\, q(y-x)\,\abs{y-x}^{d-1}, \quad x,y\in \R^d
\]
and for $A\subseteq K$ let
\[
 M_{a}(x,A) 
 = \frac{1}{2} \int_{[x]_{a}} \mathbf{1}_{A}(y)\, \eta(x,y) \dint y
 +\mathbf{1}_A(x)\left( 1-\frac{1}{2} \int_{[x]_{a}} \eta(x,y) \dint y \right).
\]
Here it is essential that $q$ is rotational invariant, namely this property assures
that $\int_{[x]_{a}} \eta(x,y) \dint y \leq 1$.
Note that $M_a$ is again a lazy transition kernel, since $M_a(x,\{x\}) \geq 1/2$.
For $x\not \in A$ by \eqref{eq:polar} we have
\begin{align*}
 M(x,A) 
 & = \frac{1}{2}\,\int_{K} \mathbf{1}_A(y)\,\a(x,y)\, q(y-x) \dint y\\
 & = \frac{1}{2 \rho(x)}\, 
     \int_0^{\rho(x)} \int_{K} \mathbf{1}_{[0,\rho(y)]}(t) \mathbf{1}_A(y)\,q(y-x)  \dint y \dint t \\
 & = \frac{1}{2 \rho(x)}\, 
     \int_0^{\rho(x)} \int_{K(t)}  \mathbf{1}_A(y)\,q(y-x)  \dint y \dint t\\
 & = \frac{d\kappa_d}{4 \rho(x)} \int_{\sphere} \int_0^{\rho(x)} \int_{L_t(x,\theta)} 
    \mathbf{1}_A(y) q(x-y)\abs{x-y}^{d-1} \dint y \dint t\dint \sigma(\theta) \\
 & = \int_\A M_a(x,A) s(x,a) \dint \lambda(a).   
\end{align*}
Thus, the transition kernel of the random walk Metropolis has the desired representation.
It remains to check
the conditions \ref{it:main_one}.-\ref{it:main_three}. of Lemma~\ref{lemma:main}:
\begin{enumerate}
 \item To the reversibility of $M_a$ and $U_a$ with respect to $\pi_a$:\\
 This follows again by a suitable application of \eqref{eq:fubini}. 
 Let $a=(a_1,a_2)$. Due to its simple form, reversibility of $U_a$ is obvious. 
 It is enough to show for disjoint $A,B\subseteq K$ that 
 \[
  \int_{A\cap K(a_2)}  M_a(x,B)\dint x = \int_{B\cap K(a_2)} M_a(x,A) \dint x.
 \] 
 Since $L_{a_2}(x,a_1) = L_{a_2}(y,a_1)$ if $y\in L_{a_2}(x,a_1) $ we have
 \begin{align*}
\frac{1}{2} &\int_{A\cap K(a_2)} \int_{[x]_a} \mathbf{1}_B(y) \eta(x,y)\dint y \dint x \\
 &  = \frac{1}{2} \int_{K(a_2)} \int_{L_{a_2}(x,a_1)} \mathbf{1}_A(x) \mathbf{1}_B(y) \eta(x,y) \dint y \dint x \\
 &  = \frac{1}{2} \int_{\Pi_{a_1}(K(a_2))} \int_{L_{a_2}(x,a_1)} \int_{L_{a_2}(y,a_1)} \mathbf{1}_A(y) \mathbf{1}_B(z) \eta(y,z) \dint z \dint y \dint x \\
 &  = \frac{1}{2} \int_{\Pi_{a_1}(K(a_2))} \int_{L_{a_2}(x,a_1)} \int_{L_{a_2}(x,a_1)} \mathbf{1}_A(y) \mathbf{1}_B(z) \eta(y,z) \dint z \dint y \dint x.
 \end{align*}
 By the symmetry of $q$, i.e. $q(y-z)=q(z-y)$ follows $\eta(y,z)=\eta(z,y)$. This leads to the reversibility.
\item To the positivity of $M_a$:\\
 By the fact that $M_a$ is a lazy transition kernel we have positivity.
 \item To $M_a U_a = U_a$:\\
 By the fact that $y\in [x]_a$ implies $[x]_a = [y]_a$ and hence $U_a(x,A)=U_a(y,A)$ for 
 all $A\subseteq K$ we obtain 
 \begin{align*}
 M_a U_a&(x,A) \\
 & = \frac{1}{2} \int_{[x]_a} U_a(y,A)\, \eta(x,y) \dint y + U_a(x,A)\left(1-\frac{1}{2}\int_{[x]_a} \eta(x,y) \dint y\right)\\
 & = U_a(x,A).   
 \end{align*}
\end{enumerate}
Thus, as a direct consequence of Lemma~\ref{lemma:main} we obtain $ M\leq U$.

\section{Concluding remarks} \label{sec:con_rem}
 
In this article we have presented a technique to compare the efficiency of Markov chains of a specific type.
Using this technique we provide two comparison hierarchies according 
to a partial ordering of Markov operators of four prominent Markov chains 
for sampling general distributions in $\R^d$. 
The comparison with respect to the partial ordering leads to 
comparison results according to different criterions, 
for example the spectral gap, the conductance or the log-Sobolev constant, cf.~Section~\ref{sec:impl}.

Let us mention here that the computational cost for the simulation of each individual Markov chain
is not taken into account. There seems to be a trade-off between efficiency 
and computional cost which should be further investigated. We leave this open for future work.  

Finally, there are three open problems related to the considered Markov chains. 

First, what is about the relation of the hit-and-run and the simple slice sampler? 
It is easy to see that there cannot be a general result as in the other cases. For this 
consider two examples: 
\begin{enumerate}
 \item If $\pi$ is the uniform distribution on $K$, then one step 
of the simple slice sampler is enough to sample $\pi$, while for the hit-and-run algorithm 
it is not (as long as $d>1$). Hence, in this situation, hit-and-run is worse. 
 \item Let $d=1$. Then, the hit-and-run algorithm samples $\pi$ in one step, regardless of $\pi$. 
Hence, in this situation hit-and-run is better.
\end{enumerate}
It seems to be interesting to find cases where hit-and-run is better. 
This is because, 
we guess that in general the computational cost of the simple slice sampler is 
(if it is at all implementable) much higher.

The second open problem concerns reverse inequalities. That is, if a Markov chain is better 
than another, how much better is it? This seems to be a delicate question and we can answer 
it only for toy examples. The techniques that were used in~\cite{Ul14} in a discrete setting 
do not seem to work here.

The last problem we want to mention: 
Is there a similar hierarchy for the \emph{mixing time}? That is, the number of steps that 
are needed to make the total variation distance ``small'', cf.~Section~\ref{sec:impl}. 
Certainly, the answer to this question additionally depends on the choice of the initial 
distribution. But the (quite analytical) techniques of this paper, 
see also~\cite{Pe73,LaRu14,Ul14,AnVi14}, 
do not seem to be suitable for this purpose. 
One interesting approach in this direction 
for Markov chains on discrete state spaces is given in 
\cite{FiKa13}.

\end{document}